\documentclass[10pt]{article}

\usepackage{amssymb}
\usepackage{amsfonts}
\usepackage{amsmath}
\usepackage{amsthm}
\input xy
\xyoption{all}

\date{}

\newtheorem{proposition}{Proposition}[section]
\newtheorem{theorem}[proposition]{Theorem}
\newtheorem{lemma}[proposition]{Lemma}
\newtheorem{example}[proposition]{Example}
\newtheorem{definition}[proposition]{Definition}

\newtheorem{remark}{Remark}

\begin{document}

\author{H. Melis Tekin Akcin}

\title{An analogue of Amitsur's property for the ring of pseudo-differential operators}


\maketitle

\begin{abstract}
Let $R$ be a ring with a derivation $\delta$. In this paper, we investigate when the left T-nilpotent radideal satisfies an analogue of Amitsur's property for pseudo-differential operator rings $R((x^{-1};\delta))$. We also obtain an alternative characterization of the prime radical of $R((x^{-1};\delta))$.


\noindent

{\em Key Words: Amitsur's property, $\delta$-compatible rings, left T-nilpotent, prime radical, pseudo-diﬀerential operator ring.}

 {\em Mathematics subject classification
2010: 16N40, 16N60, 16S32.}
\end{abstract}


\section{Introduction}

In this paper, we study rings of pseudo-differential operators, which can be seen as noncommutative generalizations of commutative Laurent series rings. 
The idea of using the algebra of pseudo-differential operators $R((\delta^{-1}))$ is started with Schur (see \cite{Schur}), later works on these algebras have done by Goodearl \cite{Goodearl1} and Tuganbaev \cite{Tuganbaev-Pseudo}. In \cite{Tuganbaev-Pseudo}, Tuganbaev has studied the ring theoretical properties of pseudo-differential operator rings.
Besides being used to construct new examples in ring theory, these rings also have some applications in different fields of mathematics, see \cite{Gelfand} and \cite{Sato} for more information.

Throughout this paper, $R$ denotes an associative ring with identity (unless otherwise stated), an ideal means a two-sided ideal and the notation "$\leq$" is used to denote the ideals. Let $R$ be a ring equipped with a derivation $\delta$ (i.e., $\delta$ is an additive map on $R$ satisfying the product rule $\delta(ab)=\delta(a)b+a\delta(b)$, for each $a, b \in R$).
The pseudo-differential operator ring over the coefficient ring $R$ formed by formal series $\sum_{i=-\infty}^{n}a_{i}x^{i}$, where $x$ is a variable, $n$ is an integer (maybe negative), the coefficients $a_{i}$ belong to the ring $R$ and is denoted by the notation $R((x^{-1};\delta))$.
In \cite[Proposition 7.2]{Tuganbaev-Pseudo}, it is verified that $R((x^{-1};\delta))$ satisfies all the ring axioms, where the 
addition is defined as usual and multiplication is defined with respect to the relations
\begin{center}
$xa=ax+\delta(a)$, $x^{-1}a=\sum_{i=0}^{\infty}(-1)^{i}\delta^{i}(a)x^{-i-1}$,
\end{center}  
for all $a\in R$.
If $\delta$ is the zero derivation, then there exists an isomorphism of the ring $R((x^{-1};\delta))$ onto the ordinary Laurent series ring $R((x))$ (This isomorphism maps $x^{-1}$ onto $x$).
The {\em Amitsur's property} of a radical says that
the radical of a polynomial ring is again a polynomial ring. This nomenclature is used since it was Amitsur who initially proved that many classical radicals such as the prime, Levitzki, Jacobson, and Brown–McCoy have this property.  Moreover, in \cite[Proposition 4.10]{Hong1}, it is proved that the left T-nilpotent radideal of a polynomial ring also satisfies the Amitsur's property. It is a natural question to extend the Amitsur's property
for other ring extensions. As a generalization of Amitsur's property, in \cite{Hong4}, the concept of {\em $\delta$-Amitsur property} is introduced for the ring of differential operators. In \cite{Ferrero}, Ferrero, Kishimoto and Motose have proved that the Jacobson, prime and Wedderburn radicals again possess $\delta$-Amitsur's property. Also, in \cite[Theorem 3.3]{Hong4}, it is showed that the left T-nilpotent radideal of the ring of differential operators satisfies the $\delta$-Amitsur property. 
 
In their seminal papers \cite{Hong1} and \cite{Hong4}, the authors have studied on characterization of the left T-nilpotent radideals of skew Laurent polynomial rings and 
the rings of differential operators.
Our primary motivation in this paper is to give a description of the left T-nilpotent radideals   
of pseudo-differential operator rings. 
Before proceeding the main results, we need to recall some concepts and definitions which will be useful while discussing the left T-nilpotent radideals of pseudo-differential operator rings.

Let $R$ be a ring and $\delta$ be a derivation of $R$,
we say that a subset $S\subseteq R$ is a {\em $\delta$-subset} if $\delta(S)\subseteq S$.
Let $I$ be an ideal of $R$. If $I$ is a $\delta$-subset of $R$, then $I$ is called a 
{\em $\delta$-ideal} of $R$. According to $\cite{Hashemi2}$, an ideal $I$ is called a {\em $\delta$-compatible
ideal} if for each $a, b\in R$, $ab\in I$ implies $a\delta(b)\in I$. If the zero ideal is $\delta$-compatible, then the ring $R$ is called a $\delta$-compatible ring. \\ 
Let $R$ be a ring with a derivation $\delta$ and if $I$ is a $\delta$-ideal of $R$, then 
\begin{center}
$\bar{\delta}:R/I \longrightarrow R/I$
\end{center} 
is a derivation of $R/I$ induced by the derivation $\delta$.

\begin{lemma}\cite[Lemma 2.1]{Hashemi2}\label{deltacompatible}
Let $R$ be a ring and $\delta$ be a derivation of $R$. Assume that $R$ is $\delta$-compatible.
If $ab=0$, then $a\delta^{n}(b)=\delta^{m}(a)b=0$ for any non-negative integers $n,m$.
\end{lemma}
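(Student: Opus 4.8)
The plan is to prove both equalities by induction on the order of the derivative, using the defining property of $\delta$-compatibility of the zero ideal together with the Leibniz rule.

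First I would establish $a\delta^{n}(b)=0$ for all $n\geq 0$ by induction on $n$. The case $n=0$ is the hypothesis $ab=0$. Assuming $a\delta^{n}(b)=0$, I apply $\delta$-compatibility of the zero ideal to the pair $a$ and $\delta^{n}(b)$: since their product is $0$, so is $a\,\delta(\delta^{n}(b))=a\delta^{n+1}(b)$, which closes the induction.

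The second equality $\delta^{m}(a)b=0$ requires one extra observation, because the form of $\delta$-compatibility recorded above only allows us to move $\delta$ onto the right-hand factor. The key step is the remark that $uv=0$ already forces $\delta(u)v=0$: indeed, by the product rule $0=\delta(uv)=\delta(u)v+u\delta(v)$, while $\delta$-compatibility gives $u\delta(v)=0$, hence $\delta(u)v=0$. With this in hand I induct on $m$: the case $m=0$ is the hypothesis $ab=0$, and if $\delta^{m}(a)b=0$ then applying the remark to $u=\delta^{m}(a)$ and $v=b$ yields $\delta^{m+1}(a)b=0$.

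I expect the main (essentially the only) subtlety to be this last point: recognizing that the one-sided definition of $\delta$-compatibility adopted in the paper, combined with the Leibniz identity, automatically yields the symmetric implication $ab=0\Rightarrow \delta(a)b=0$; after that, everything is a routine one-variable induction. One could alternatively merge the two claims into a single induction on $n+m$ establishing $\delta^{m}(a)\delta^{n}(b)=0$, but keeping the two separate inductions makes the bookkeeping transparent.
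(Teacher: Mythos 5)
Your proof is correct; the paper itself only cites this result from \cite{Hashemi2} without reproducing an argument, and your two inductions --- including the key observation that the one-sided compatibility hypothesis together with the Leibniz rule already yields $uv=0\Rightarrow\delta(u)v=0$ --- are exactly the standard proof given in that source. Nothing further is needed.
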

\noindent
If $S$ is a subset of a ring $R$, we denote the {\em left annihilator} of $S$ in $R$ by the notation  $(0:S)$.
\noindent
For an arbitrary ring $R$, the ideals $R^{(\alpha)}$ are defined recursively in \cite{Gardner} as follows:
$R^{(0)}=0$, $R^{(\alpha+1)}/R^{(\alpha)}=(0:R/R^{(\alpha)})$ and $R^{(\alpha)}=\underset{\beta<\alpha}{\cup}R^{(\beta)}$, if $\alpha$ is a limit ordinal. If $R^{(\mu)}=R$ for some ordinal $\mu$, then the series
\begin{center}
$0=R^{(0)}\subseteq R^{(1)} \subseteq \ldots \subseteq R^{(\alpha)}\subseteq \ldots \subseteq R^{(\mu)}=R$
\end{center}
is called the {\em upper left annihilator series} of $R$.
\begin{remark}\label{Ralpha}
It can be seen easily that, by using transfinite induction \cite[Proposition 9, Section 1.3]{Divinsky}, 
the ideals defined as above are actually $\delta$-ideals,  i.e., 
$\delta(R^{(\alpha)})\subseteq R^{(\alpha)}$ for each ordinal $\alpha$. 
\end{remark}


\section{Main Results}

In radical theory, it is interesting to characterize the radicals of 
ring extensions in terms of the base rings. 
In \cite{Hong1} and \cite{Hong4}, the authors have proved the analogue 
of this question for the left T-nilpotent radideals of skew Laurent polynomial
rings and the ring of differential operators, by using K\"{o}nig's tree lemma.  
One difficulty with extending the situation for pseudo-differential operator rings
is that we no longer have a finite coefficient set. In this section, we study the left T- nilpotent radideal of pseudo-differential operator rings $R((x^{-1};\delta))$. 
We begin by giving the concept of left T-nilpotent set and its properties.

\begin{definition}\cite{Lam}
A set $S\subseteq R$ is called left T-nilpotent if for any countable sequence of elements 
$s_1,s_2,\ldots \in S$, there exists an integer $k\geq 1$ such that 
\begin{center}
$s_1s_2\ldots s_k=0$.
\end{center}
\end{definition}
\noindent
Note that right T-nilpotent sets are defined in a similar way and we say that a set is 
{\em T-nilpotent}, if it is both left and right T-nilpotent.
The terms are due to Bass \cite{Bass}, but the concepts were introduced by Levitzki \cite{Levitzki}. 
By the very definition, it is easy to see that any subset 
of a left T-nilpotent set is 
again left T-nilpotent. Also, if an ideal is left T-nilpotent, then it is nil.
Moreover, if an ideal is nilpotent, then it is left T-nilpotent.
\begin{proposition}\cite[Proposition 23.15]{Lam}\label{Prop2.2} 
Let $R$ be a ring and $I$ be an ideal of $R$. If $I$ is left T-nilpotent, then $I \subseteq P(R)$ where $P(R)$ denotes the prime radical of $R$. In particular, $I$ is locally nilpotent.   
\end{proposition}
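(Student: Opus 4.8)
The plan is to use the standard description of the prime radical via $m$-systems: recall that $S\subseteq R$ is an \emph{$m$-system} if for all $a,b\in S$ there is $r\in R$ with $arb\in S$, and that an element of $R$ lies in $P(R)$ exactly when every $m$-system containing it contains $0$; equivalently, $a\notin P(R)$ precisely when there is an $m$-system $S$ with $a\in S$ and $0\notin S$ (one may take $S=R\setminus P$ for a prime ideal $P$ with $a\notin P$).

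First I would prove $I\subseteq P(R)$ by contradiction. Assume $I$ is left T-nilpotent but some $a\in I$ satisfies $a\notin P(R)$, and fix an $m$-system $S$ with $a\in S$ and $0\notin S$. I then construct by induction on $k$ elements $s_1,s_2,\ldots\in I$ together with $t_k\in S$ satisfying $t_k=s_1s_2\cdots s_k$: put $s_1=t_1=a$; given $t_k\in S$, apply the $m$-system property to the pair $t_k,a\in S$ to obtain $r\in R$ with $t_kra\in S$, and set $s_{k+1}=ra$ and $t_{k+1}=t_ks_{k+1}$. The key observation is that $s_{k+1}=ra\in I$ since $a\in I$ and $I$ is an ideal, while $t_{k+1}=s_1\cdots s_{k+1}\in S$. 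Because $0\notin S$, every finite product $s_1s_2\cdots s_k=t_k$ is nonzero, contradicting the left T-nilpotency of $I$. Hence $I\subseteq P(R)$.

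For the last assertion I would show, more generally, that any left T-nilpotent set $S$ is locally nilpotent, by a König's tree lemma argument of the type already used in \cite{Hong1} and \cite{Hong4}. Given $b_1,\ldots,b_n\in S$, form the tree whose vertices at level $k$ are the nonzero products $b_{i_1}\cdots b_{i_k}$ with each $i_j\in\{1,\ldots,n\}$, where a vertex has at most $n$ children, obtained by right multiplication by $b_1,\ldots,b_n$. If the subring (without identity) generated by $b_1,\ldots,b_n$ were not nilpotent, there would be nonzero products of every length; since every prefix of a nonzero product is nonzero, the tree would then have a vertex at every level, and being finitely branching it would, by König's lemma, contain an infinite branch $b_{j_1},\,b_{j_1}b_{j_2},\,b_{j_1}b_{j_2}b_{j_3},\ldots$ all of whose finite initial products are nonzero, again contradicting left T-nilpotency. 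Alternatively, once $I\subseteq P(R)$ is known one may simply invoke that $P(R)$ is contained in the Levitzki radical and is therefore locally nilpotent.

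I expect the only delicate point to be the bookkeeping in the inductive construction, in particular the verification that the entire sequence $s_1,s_2,\ldots$ stays inside $I$; this is precisely where one uses that $I$ is an ideal, absorbing the auxiliary element $r$ through $ra\in I$. Everything else is routine.
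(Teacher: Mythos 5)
Your proposal is correct. Note that the paper does not prove this statement at all: it is quoted verbatim from \cite[Proposition 23.15]{Lam}, so there is no in-paper argument to compare against; your $m$-system construction (build $s_{k+1}=ra\in I$ from the $m$-system property applied to $t_k$ and $a$, so that every partial product lies in $S$ and hence is nonzero) is essentially the standard proof of that result, and both of your routes to local nilpotency are sound --- the K\"onig's tree lemma argument in the style of \cite{Hong1} and \cite{Hong4}, or the containment of $P(R)$ in the Levitzki radical. The only cosmetic point is in the tree argument: take the vertices to be index sequences $(i_1,\ldots,i_k)$ with $b_{i_1}\cdots b_{i_k}\neq 0$ rather than the products themselves, so that the graph is literally a finitely branching tree; with that reading the argument goes through exactly as you wrote it.
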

\begin{lemma}\cite[Lemma 4.2]{Hong1}
Let $R$ be a ring, $I\subseteq R$ and $J$ be an ideal of $R$. If $I$ and $J$ are left T-nilpotent, 
then so is $I+J$.
\end{lemma}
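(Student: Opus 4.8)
The plan is to start from an arbitrary countable sequence $c_1,c_2,\ldots\in I+J$ and produce an index $k$ with $c_1c_2\cdots c_k=0$. Write $c_i=a_i+b_i$ with $a_i\in I$ and $b_i\in J$. The idea is to cut the sequence $(c_i)$ into consecutive finite blocks in such a way that the product over each block, after being multiplied out, falls inside the ideal $J$; the sequence of these block-products then lies in $J$, and left T-nilpotence of $J$ finishes the job.

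To build the blocks, I would apply left T-nilpotence of $I$ repeatedly to tails of the sequence $(a_i)$: there is $k_1\geq 1$ with $a_1a_2\cdots a_{k_1}=0$; applying the same to $a_{k_1+1},a_{k_1+2},\ldots$ yields $k_2>k_1$ with $a_{k_1+1}\cdots a_{k_2}=0$; and recursively one obtains an increasing sequence $0=k_0<k_1<k_2<\cdots$ such that $a_{k_{m-1}+1}a_{k_{m-1}+2}\cdots a_{k_m}=0$ for every $m\geq 1$.

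The key step is to show that each block-product $d_m:=c_{k_{m-1}+1}c_{k_{m-1}+2}\cdots c_{k_m}$ lies in $J$. Expanding $d_m=\prod_{i=k_{m-1}+1}^{k_m}(a_i+b_i)$ by distributivity gives a sum of monomials in which each factor is either $a_i$ or $b_i$. The unique monomial in which every factor equals $a_i$ is $a_{k_{m-1}+1}\cdots a_{k_m}=0$. In any other monomial some $b_j$ occurs; choosing the least such index $j$, that monomial has the form $(a_{k_{m-1}+1}\cdots a_{j-1})\,b_j\,(\cdots)$, which lies in $Rb_jR\subseteq J$ since $J$ is a two-sided ideal. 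Hence $d_m\in J$ for all $m$.

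Finally, $(d_m)_{m\geq 1}$ is a sequence in $J$, so by left T-nilpotence of $J$ there is $M\geq 1$ with $d_1d_2\cdots d_M=0$; since $d_1d_2\cdots d_M=c_1c_2\cdots c_{k_M}$ by construction, this gives $c_1c_2\cdots c_{k_M}=0$, as required. I expect the only delicate point to be the block step, where it is essential that $J$ be two-sided rather than merely a left ideal: that is precisely what forces every mixed monomial into $J$ regardless of which factors follow the first $b_j$. Everything else is bookkeeping with the indices $k_m$.
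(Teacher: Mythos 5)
Your argument is correct, and it is essentially the standard proof of this fact; the paper itself gives no proof but simply cites \cite[Lemma 4.2]{Hong1}, where the same blocking idea is used. The only cosmetic difference is that one can avoid the explicit monomial expansion by passing to the quotient ring $R/J$: since $c_i\equiv a_i \pmod J$ and the image of $I$ in $R/J$ is again left T-nilpotent, each block product $c_{k_{m-1}+1}\cdots c_{k_m}$ is congruent to $a_{k_{m-1}+1}\cdots a_{k_m}=0$ modulo $J$, which is exactly your conclusion that $d_m\in J$; the rest of your bookkeeping with the indices $k_m$ and the two-sidedness of $J$ is precisely what is needed.
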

\begin{proposition}\cite[Proposition 4.3]{Hong1} \label{2-sided}
Let $R$ be a ring, $I\subseteq R$ and $J$ be a one-sided ideal of $R$.
\begin{itemize}
\item[(1)] If $J$ is left T-nilpotent, then $RJR$ is left T-nilpotent.
\item[(2)] If $I$ and $J$ are left T-nilpotent, then $I+J$ is left T-nilpotent.
\end{itemize}
\end{proposition}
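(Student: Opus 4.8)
The plan is to establish (1) first and then deduce (2) from it together with Lemma~4.2 and the fact that a subset of a left T-nilpotent set is left T-nilpotent. For (1) I would treat the case in which $J$ is a \emph{left} ideal; the right-ideal case is entirely symmetric, peeling coefficients off on the opposite side. Since $1\in R$ and $RJ\subseteq J$, one has $RJR=JR$, so it suffices to prove that $JR$ is left T-nilpotent.

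Fix a sequence $a_1,a_2,\ldots\in JR$ and write each $a_n=\sum_{k\in K_n}j_{n,k}r_{n,k}$ as a \emph{finite} sum with $j_{n,k}\in J$, $r_{n,k}\in R$, $K_n$ a finite index set. Expanding $a_1a_2\cdots a_m$ distributively and using $r_{i,k_i}j_{i+1,k_{i+1}}\in RJ\subseteq J$, every resulting term has the form $w(k_1,\ldots,k_m)\,r_{m,k_m}$ with
$$w(k_1,\ldots,k_m)=j_{1,k_1}\,(r_{1,k_1}j_{2,k_2})\,(r_{2,k_2}j_{3,k_3})\cdots(r_{m-1,k_{m-1}}j_{m,k_m}),$$
a product of $m$ elements of $J$; moreover $w(k_1,\ldots,k_{m+1})=w(k_1,\ldots,k_m)\,(r_{m,k_m}j_{m+1,k_{m+1}})$, so once a partial word vanishes, all of its extensions vanish. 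Now build a tree $T$ whose level-$m$ vertices are the tuples $(k_1,\ldots,k_m)\in K_1\times\cdots\times K_m$ with $w(k_1,\ldots,k_m)\neq 0$, a level-$(m+1)$ vertex being joined to its length-$m$ truncation; the displayed identity makes this a genuine downward-closed tree, and every level is finite because each $K_n$ is finite. If $T$ were infinite, K\"onig's tree lemma would supply an infinite branch $(k_1,k_2,\ldots)$; putting $s_1=j_{1,k_1}$ and $s_{i+1}=r_{i,k_i}j_{i+1,k_{i+1}}\in J$ we would obtain a sequence in $J$ with $s_1s_2\cdots s_m=w(k_1,\ldots,k_m)\neq 0$ for all $m$, contradicting the left T-nilpotence of $J$. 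Hence $T$ is finite, of height less than some $N$, so for all $m\geq N$ every word $w(k_1,\ldots,k_m)$ is zero and $a_1a_2\cdots a_m=\sum_{(k_1,\ldots,k_m)}w(k_1,\ldots,k_m)\,r_{m,k_m}=0$. This proves (1).

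For (2), apply (1) to obtain that $RJR$ is a left T-nilpotent two-sided ideal of $R$. By Lemma~4.2, $I+RJR$ is left T-nilpotent. Since $1\in R$ forces $J\subseteq RJR$, we have $I+J\subseteq I+RJR$, and any subset of a left T-nilpotent set is left T-nilpotent, so $I+J$ is left T-nilpotent.

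The main obstacle is the bookkeeping in (1): after expanding the product we are confronted not with a single word in $J$ but with a finite, branching family of words over index sets that grow with $m$, so the hypothesis cannot be invoked directly on any one sequence. K\"onig's tree lemma is precisely the device that upgrades ``every branch eventually dies'' to ``the entire tree is finite'', which is what forces $a_1\cdots a_m=0$ uniformly for large $m$. One should also verify the small but essential points that $RJ\subseteq J$ (resp.\ $JR\subseteq J$) really collapses $RJR$ and that the truncation map genuinely makes the set of nonzero partial words into a tree.
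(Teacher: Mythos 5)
Your proposal is correct. Note that the paper itself does not prove this statement; it is quoted from Hong--Kim--Nielsen \cite[Proposition 4.3]{Hong1}, and your argument is essentially the technique of that source, namely the K\"onig's tree lemma argument that the present paper explicitly credits to \cite{Hong1} and \cite{Hong4}. Your deduction of (2) from (1) via Lemma 4.2 and the closure of left T-nilpotence under subsets, using $J\subseteq RJR$ (which needs $1\in R$, as the paper assumes), is also fine. One small caveat: the right-ideal case of (1) is not literally ``symmetric,'' since left T-nilpotence is a one-sided notion; there you must regroup the expanded terms as $r_{1,k_1}\,(j_{1,k_1}r_{2,k_2})(j_{2,k_2}r_{3,k_3})\cdots(j_{m-1,k_{m-1}}r_{m,k_m})\,j_{m,k_m}$, so the partial word again grows by right multiplication by elements of $J$ and the same tree argument applies -- this adjustment is routine, but worth spelling out.
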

\noindent
For a deeper knowledge and basic results about left T-nilpotency, see \cite[section 4]{Hong1} and \cite[section 23]{Lam}.
The left T-nilpotent radideal of $R$ is denoted by $\mathcal{I}_{l}$ and defined as the ideal function given by
\begin{center}
$\mathcal{I}_{l}(R)=\sum\{I\leq R: \ I \text{ is left T-nilpotent} \}$.
\end{center}
As one might expect, the ideal $\mathcal{I}_{l}(R)$ does not need to be left T-nilpotent itself. 
\noindent
 
Let $I((x^{-1};\delta))$ be the subset of $R((x^{-1};\delta))$ whose coefficients are all contained in $I$. 
We begin with the following lemma which gives the relations between the ideals of $R((x^{-1};\delta))$ and $R$. 
\begin{lemma} \label{Ideal}
Let $R$ be a ring and $\delta$ be a derivation of $R$. Then the following statements hold:
\begin{itemize}
\item[(1)] If $I$ is a right ideal of $R$, then $I((x^{-1};\delta))$ is a right ideal of $R((x^{-1};\delta))$.
\item[(2)] Let $I$ be an ideal of $R$. Then $I((x^{-1};\delta))$ is an ideal of $R((x^{-1};\delta))$ if and only if $I$ is a $\delta$-ideal of $R$.  
\item[(3)] If $I$ is a nilpotent $\delta$-ideal of $R$, then $I((x^{-1};\delta))$ is a nilpotent ideal of $R((x^{-1};\delta))$.
\end{itemize}
\end{lemma}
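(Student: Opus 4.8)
The plan is to reduce all three parts to a single computation describing how powers of $x$ act on coefficients. From the defining relations $xa=ax+\delta(a)$ and $x^{-1}a=\sum_{i\ge 0}(-1)^i\delta^i(a)x^{-i-1}$, a routine induction on $|i|$ should give, for every integer $i$ and every $a\in R$,
\[
x^i a=\sum_{k\ge 0}z_{i,k}\,\delta^k(a)\,x^{i-k},\qquad z_{i,k}\in\mathbb{Z},
\]
the sum being finite when $i\ge 0$ (with $z_{i,k}=\binom{i}{k}$) and a legitimate formal series when $i<0$ because $R((x^{-1};\delta))$ is a ring by \cite[Proposition 7.2]{Tuganbaev-Pseudo}. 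Multiplying out, for $f=\sum_i a_i x^i$ and $g=\sum_j b_j x^j$ one then gets $fg=\sum_{i,j,k}z_{i,k}\,a_i\delta^k(b_j)\,x^{i+j-k}$, so every coefficient of $fg$ is a (locally finite) $\mathbb{Z}$-linear combination of elements $a_i\,\delta^k(b_j)$ with $k\ge 0$. All three parts are then read off from which subset these elements occupy.

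For part (1), if $I$ is a right ideal, $f\in I((x^{-1};\delta))$ and $g\in R((x^{-1};\delta))$, then each $a_i\in I$, hence $a_i\delta^k(b_j)\in I$ and every coefficient of $fg$ lies in $I$, so $fg\in I((x^{-1};\delta))$. For part (2), I would first assume $I$ is a $\delta$-ideal: part (1) already gives that $I((x^{-1};\delta))$ is a right ideal, and for the left side I apply the formula to $gf$ with $g\in R((x^{-1};\delta))$, $f\in I((x^{-1};\delta))$; its coefficients are $\mathbb{Z}$-combinations of $b_j\delta^k(a_i)$, where $\delta^k(a_i)\in I$ because $I$ is a $\delta$-ideal and then $b_j\delta^k(a_i)\in I$ because $I$ is a left ideal, so $gf\in I((x^{-1};\delta))$ and $I((x^{-1};\delta))$ is two-sided. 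Conversely, if $I((x^{-1};\delta))$ is an ideal then for $a\in I$ we have $xa=\delta(a)+ax\in I((x^{-1};\delta))$, and reading off the constant coefficient forces $\delta(a)\in I$; thus $I$ is a $\delta$-ideal.

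For part (3), let $I$ be a $\delta$-ideal with $I^n=0$; by part (2), $I((x^{-1};\delta))$ is an ideal. I would prove by induction on $m\ge1$ that every product of $m$ elements of $I((x^{-1};\delta))$ has all its coefficients in $I^m$: the case $m=1$ is immediate, and for the step one writes such a product as $PQ$ with $P$ a product of $m-1$ elements (coefficients in $I^{m-1}$ by induction) and $Q\in I((x^{-1};\delta))$ (coefficients in $I$), so by the displayed computation the coefficients of $PQ$ are $\mathbb{Z}$-combinations of $p\,\delta^k(q)$ with $p\in I^{m-1}$, $q\in I$, and since $\delta^k(q)\in I$ one gets $p\,\delta^k(q)\in I^{m-1}I=I^m$. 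Taking $m=n$ shows every product of $n$ elements of $I((x^{-1};\delta))$ is zero, i.e.\ $I((x^{-1};\delta))^n=0$.

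The one genuinely technical point is the opening computation: establishing the displayed formula for negative $i$ and checking that the resulting double and triple sums are locally finite, so that ``the coefficient of $x^\ell$ in $fg$'' is meaningful and equals a finite $\mathbb{Z}$-combination of the stated elements. This is essentially forced by, and most cleanly extracted from, the verification that $R((x^{-1};\delta))$ is a ring in \cite[Proposition 7.2]{Tuganbaev-Pseudo}; once it is in hand, parts (1)--(3) follow uniformly by tracking which additive subgroup ($I$, or $I^m$) the coefficients land in.
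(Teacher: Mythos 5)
Your proof is correct. The paper itself disposes of this lemma in one line by citing Lemma 2.1 of Paykan--Moussavi \cite{Paykan_2019}, so there is no written argument to compare step by step; your proposal simply makes explicit the computation that the citation encapsulates. The key point you isolate --- that $x^i a=\sum_{k\ge 0} z_{i,k}\,\delta^k(a)x^{i-k}$ with integer $z_{i,k}$ (binomial coefficients for $i\ge 0$, signed binomial-type coefficients obtained by iterating $x^{-1}a=\sum_{i\ge 0}(-1)^i\delta^i(a)x^{-i-1}$ for $i<0$), so that every coefficient of $fg$ is a finite $\mathbb{Z}$-linear combination of elements $a_i\delta^k(b_j)$ --- is exactly the fact the paper itself uses later without proof in Lemma \ref{extofTnilpotence} and Theorem \ref{MainTheo1}, so writing it out is a genuine service rather than a detour. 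Your local-finiteness check is also sound: for a fixed target exponent $\ell$ the constraints $i\le n$, $j\le m$, $k=i+j-\ell\ge 0$ confine $(i,j)$ to the finite box $[\ell-m,n]\times[\ell-n,m]$. Parts (1)--(3) then follow exactly as you say: right-ideal closure from $a_i\delta^k(b_j)\in I$, the forward direction of (2) from $\delta$-stability plus left-ideal closure applied to $b_j\delta^k(a_i)$, the converse by reading the constant coefficient of $xa=ax+\delta(a)$, and (3) by the induction showing coefficients of $m$-fold products lie in $I^m$, where only the $\delta$-stability of $I$ (not of $I^{m-1}$) is needed. In short: same underlying approach as the source the paper leans on, but self-contained, which is arguably preferable here.
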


\begin{proof}
The proof can be seen easily by using \cite[Lemma 2.1]{Paykan_2019}.
\end{proof}
\begin{lemma} \label{Tnilpotence}
Let $R$ be a ring and $\delta$ be a derivation of $R$. Assume that $R$
is $\delta$-compatible. If $aR$ is left T-nilpotent, then $\delta^{i}(a)R$ is left T-nilpotent for each non-negative integer $i$.
\end{lemma}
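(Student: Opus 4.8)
The plan is to reduce the statement about $\delta^{i}(a)R$ to the given hypothesis on $aR$ by exploiting $\delta$-compatibility, which lets us move derivations past a vanishing product (Lemma \ref{deltacompatible}). It suffices to treat $i=1$, since then an induction on $i$ finishes the claim: if $\delta^{i}(a)R$ is left T-nilpotent and $b:=\delta^{i}(a)$, applying the $i=1$ case to $b$ (noting $R$ is still $\delta$-compatible) gives that $\delta^{i+1}(a)R=\delta(b)R$ is left T-nilpotent. So fix a countable sequence $\delta(a)r_{1},\delta(a)r_{2},\ldots\in\delta(a)R$; I must produce $k$ with $\delta(a)r_{1}\delta(a)r_{2}\cdots\delta(a)r_{k}=0$.

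The key observation is the following rewriting trick. Because $aR$ is left T-nilpotent, there is an integer $m\geq 1$ with $a r_{1}' a r_{2}'\cdots a r_{m}'=0$ for any choice of $r_{j}'\in R$; in particular one can feed in cleverly chosen elements. Consider the product $P=ar_{1}ar_{2}\cdots ar_{m}$; since $P=0$ and $R$ is $\delta$-compatible, Lemma \ref{deltacompatible} applies repeatedly to the factorization $P=a\cdot(r_{1}ar_{2}\cdots ar_{m})$ and its sub-factorizations, letting us replace leading factors $a$ by $\delta(a)$ one at a time while preserving the zero value: concretely, from $a\cdot u=0$ we get $\delta(a)\cdot u=0$, and from $a r_{1} a v = 0$, after writing it as $(a r_{1})\cdot(a v)=0$ we get $(a r_{1})\cdot \delta(a v)=0$, and one expands $\delta(av)=\delta(a)v+a\delta(v)$ to propagate. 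The combinatorics here is the heart of the argument: I expect to set up, by a secondary induction on the number of factors, the statement that $\delta(a)s_{1}\,\delta(a)s_{2}\cdots\delta(a)s_{m}=0$ for all $s_{j}\in R$, by peeling factors from the left and using that $a R$ annihilates on the right anything it would annihilate, together with $\delta$-compatibility to convert each $a$ into $\delta(a)$.

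The main obstacle will be handling the cross-terms generated by the Leibniz rule: when I try to turn $a s_{1} a s_{2}\cdots$ into $\delta(a)s_{1}\delta(a)s_{2}\cdots$, each application of $\delta$ to an interior product spawns a sum, and I need every summand to still be killed by the left T-nilpotency of $aR$ (or of $\delta(a)R$ at an earlier induction stage). The clean way around this is not to differentiate the whole product but to use Lemma \ref{deltacompatible} in the one-sided form "$cd=0\Rightarrow c\,\delta^{n}(d)=0$ and $\delta^{n}(c)\,d=0$" factor-by-factor: starting from $a(r_{1}ar_{2}\cdots ar_{m})=0$, replace the outermost $a$ by $\delta(a)$; then regard $\delta(a)r_{1}\cdot(ar_{2}\cdots ar_{m})=0$ and replace the next $a$, and so on. At each stage the tail $ar_{j+1}\cdots ar_{m}$ is unchanged, so no Leibniz expansion is needed — only the bare implication of Lemma \ref{deltacompatible}. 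Carrying this out for an arbitrary length-$m$ word shows $\delta(a)R$ is left T-nilpotent, and the outer induction on $i$ then yields the full statement.
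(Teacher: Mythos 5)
Your overall strategy coincides with the paper's: fix the sequence $r_{1},r_{2},\ldots$, use left T-nilpotency of $aR$ to annihilate a finite product $ar_{1}ar_{2}\cdots ar_{m}$, and then use $\delta$-compatibility (Lemma \ref{deltacompatible}) to replace each factor $a$ by $\delta^{i}(a)$. However, one step is stated incorrectly: the claim ``there is an integer $m\geq 1$ with $ar_{1}'ar_{2}'\cdots ar_{m}'=0$ for any choice of $r_{j}'\in R$'' does not follow from left T-nilpotency. T-nilpotency only yields, for each fixed sequence, an index depending on that sequence; a uniform $m$ valid for all choices would amount to bounded nilpotency of $aR$, which is strictly stronger (this distinction between T-nilpotency and nilpotency is precisely the point of the notion in this paper). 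The same over-quantification reappears in your intermediate goal ``$\delta(a)s_{1}\cdots\delta(a)s_{m}=0$ for all $s_{j}\in R$.'' Fortunately neither uniform statement is needed: having fixed $r_{1},r_{2},\ldots$ at the outset, apply T-nilpotency to the sequence $ar_{1},ar_{2},\ldots\in aR$ to obtain $m$ (depending on this sequence) with $ar_{1}ar_{2}\cdots ar_{m}=0$, and then it suffices to conclude $\delta(a)r_{1}\cdots\delta(a)r_{m}=0$ for these particular $r_{j}$. With that correction the argument is sound and is essentially the paper's proof.

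A second, smaller inaccuracy: the assertion that ``no Leibniz expansion is needed --- only the bare implication of Lemma \ref{deltacompatible}'' does not hold for the interior factors. From $\delta(a)r_{1}\cdot(ar_{2}\cdots ar_{m})=0$ the bare implication only lets you apply $\delta$ to the entire tail, which is not the replacement you want. The correct one-step tool, which you in fact derive in your second paragraph, is: if $uav=0$ then $u\delta(a)v=0$ (compatibility gives $u\,\delta(av)=0$ and $(ua)\delta(v)=0$; expanding $\delta(av)=\delta(a)v+a\delta(v)$ and subtracting yields $u\delta(a)v=0$), and iterating gives $u\delta^{i}(a)v=0$. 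Applying this to each occurrence of $a$ from left to right converts $ar_{1}\cdots ar_{m}=0$ into $\delta^{i}(a)r_{1}\cdots\delta^{i}(a)r_{m}=0$ directly for every $i$, which also renders your outer induction on $i$ unnecessary; this is exactly the (equally terse) appeal to Lemma \ref{deltacompatible} made in the paper's proof.
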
  
\begin{proof}
Fix an arbitrary sequence of elements $r_{1},r_{2},\ldots \in R$. By the assumption, there exists an integer $k\geq 1$ such that $ar_{1}ar_{2}\ldots ar_{k}=0$.
Since $R$ is $\delta$-compatible, by Lemma \ref{deltacompatible}, we have $\delta^{i}(a)r_{1}\delta^{i}(a)r_{2}\ldots \delta^{i}(a)r_{k}=0$ for each non-negative integer $i$. This means that $\delta^{i}(a)R$ is left T-nilpotent.
\end{proof} 
\noindent

The following ring-theoretic characterization on left T-nilpotent rings is obtained by Levitzki \cite{Levitzki}. We state this result without the proof and interested reader is referred to see \cite{Levitzki} and \cite[Theorem 1.3]{Gardner} for more information.
\begin{theorem}
Let $R$ be a ring (maybe without identity). Then $R$ is left T-nilpotent if and only if the upper left 
annihilator series of $R$ exists.
\end{theorem}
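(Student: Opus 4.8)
\emph{Proof sketch.} The plan is to prove the two implications separately, using transfinite induction for the ``if'' direction and an ordinal--length/stabilization argument for the ``only if'' direction. Throughout it is convenient to unwind the definition of $(0:R/R^{(\alpha)})$ into the concrete description
$R^{(\alpha+1)}=\{a\in R : aR\subseteq R^{(\alpha)}\}$, and to record that the $R^{(\alpha)}$ form a weakly increasing transfinite chain of (two--sided) ideals of $R$; the latter is the fact already noted in Remark~\ref{Ralpha}. No use of an identity element of $R$ is made anywhere.

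($\Leftarrow$) Assume $R^{(\mu)}=R$ for some ordinal $\mu$. I would prove by transfinite induction on $\alpha$ the statement: \emph{for every $a\in R^{(\alpha)}$ and every sequence $s_1,s_2,\ldots\in R$, there is $k\geq 1$ with $as_1s_2\cdots s_k=0$.} The base case $\alpha=0$ is immediate since $R^{(0)}=0$. If $\alpha$ is a limit ordinal, then $a\in R^{(\alpha)}=\bigcup_{\beta<\alpha}R^{(\beta)}$ lies in some $R^{(\beta)}$ with $\beta<\alpha$, and the induction hypothesis applies verbatim. If $\alpha=\gamma+1$, then $a\in R^{(\gamma+1)}$ means $aR\subseteq R^{(\gamma)}$, so $as_1\in R^{(\gamma)}$; applying the induction hypothesis to $as_1\in R^{(\gamma)}$ and the shifted sequence $s_2,s_3,\ldots$ yields the desired $k$. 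Now given an arbitrary sequence $s_1,s_2,\ldots\in R=R^{(\mu)}$, apply the statement to $a=s_1\in R^{(\mu)}$ and the sequence $s_2,s_3,\ldots$ to get $s_1s_2\cdots s_k=0$ for some $k$; hence $R$ is left T-nilpotent.

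($\Rightarrow$) Assume $R$ is left T-nilpotent. Since $R^{(0)}\subseteq R^{(1)}\subseteq\cdots$ is a weakly increasing transfinite chain of subsets of the set $R$, it cannot be strictly increasing through all the ordinals, so there is an ordinal $\lambda$ with $R^{(\lambda)}=R^{(\lambda+1)}$; setting $A:=R^{(\lambda)}$, the recursion $R^{(\beta+1)}=\{a:aR\subseteq R^{(\beta)}\}$ together with the limit clause forces $R^{(\beta)}=A$ for all $\beta\geq\lambda$, so the upper left annihilator series exists precisely when $A=R$. Suppose, for contradiction, that $A\subsetneq R$. From $A=R^{(\lambda+1)}=\{a\in R : aR\subseteq R^{(\lambda)}=A\}$ one reads off that in $\bar R:=R/A$ (a nonzero ring) the left annihilator $(0:\bar R)$ is zero. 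Hence a nonzero element of $\bar R$ is never annihilated on the right by all of $\bar R$; starting from any $\bar a\neq 0$ in $\bar R$ and iterating this, I would construct $\bar r_1,\bar r_2,\ldots\in\bar R$ with $\bar a\,\bar r_1\cdots \bar r_k\neq 0$ for every $k\geq 0$. Choosing $s_1\in R$ lifting $\bar a$ and $s_{i+1}\in R$ lifting $\bar r_i$, each initial product $s_1s_2\cdots s_k$ reduces modulo $A$ to $\bar a\,\bar r_1\cdots\bar r_{k-1}\neq 0$ and is therefore nonzero in $R$, contradicting the left T-nilpotency of $R$. Thus $A=R$ and the series exists.

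\textbf{Where the work is.} The two inductive arguments are routine once the description $R^{(\alpha+1)}=\{a:aR\subseteq R^{(\alpha)}\}$ is in hand. The point that requires the most care is the ``only if'' direction: one must justify that the ascending chain of ideals $R^{(\alpha)}$ necessarily stabilizes (for cardinality reasons, since $\mathcal{P}(R)$ is a set), identify the stable value with an ideal $A$ satisfying $A=\{a\in R : aR\subseteq A\}$, and then manufacture a ``bad'' sequence modulo $A$. This last construction is exactly the place where left T-nilpotency is violated, and it is what pins down $A=R$.
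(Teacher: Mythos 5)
Your proof is correct. Note that the paper itself gives no argument for this theorem — it is quoted from Levitzki and Gardner \cite[Theorem 1.3]{Gardner} without proof — and what you have written is essentially the standard proof from those sources: transfinite induction on $\alpha$ using $R^{(\alpha+1)}=\{a\in R: aR\subseteq R^{(\alpha)}\}$ for the ``if'' direction, and, for the ``only if'' direction, stabilization of the chain for cardinality reasons at an ideal $A$ with $(0:R/A)=0$ in $R/A$, followed by the construction of a sequence all of whose initial products survive modulo $A$, contradicting left T-nilpotency unless $A=R$. One cosmetic point: the fact that the $R^{(\alpha)}$ form an increasing chain of two-sided ideals (needed for $R^{(\alpha)}\subseteq R^{(\alpha+1)}$ and for the quotient $R/A$ to make sense) is not really the content of Remark~\ref{Ralpha}, which concerns $\delta$-stability; it is, however, an easy transfinite induction, so this does not affect the argument.
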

This result enables us to obtain an analogue of Amitsur's property for the left T-nilpotent radideal of pseudo-differential operator rings.
\begin{lemma}\label{extofTnilpotence}
Let $R$ be a ring and $\delta$ be a derivation of $R$. If $I$ is a left T-nilpotent 
$\delta$-ideal of $R$, then $I((x^{-1};\delta))$ is a left T-nilpotent ideal of $R((x^{-1};\delta))$.
\end{lemma}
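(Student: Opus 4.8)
The plan is to invoke Levitzki's theorem stated above: it suffices to show that the upper left annihilator series of $S:=I((x^{-1};\delta))$ exists, i.e.\ reaches all of $S$. Since $I$ is left T-nilpotent, applying that theorem (and the construction of \cite{Gardner}) to the non-unital ring $I$, equipped with the restriction of $\delta$, produces its upper left annihilator series
\begin{center}
$0=I^{(0)}\subseteq I^{(1)}\subseteq \ldots \subseteq I^{(\alpha)}\subseteq \ldots \subseteq I^{(\mu)}=I$,
\end{center}
with $I^{(\alpha+1)}/I^{(\alpha)}=(0:I/I^{(\alpha)})$. Exactly as in Remark \ref{Ralpha}, a transfinite induction shows that each $I^{(\alpha)}$ is a $\delta$-ideal of $R$, so by Lemma \ref{Ideal}(2) each $I^{(\alpha)}((x^{-1};\delta))$ is an ideal of $R((x^{-1};\delta))$, hence of $S$.

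The computational core is the following claim: if $f\in I^{(\alpha+1)}((x^{-1};\delta))$ and $g\in S$, then $fg\in I^{(\alpha)}((x^{-1};\delta))$. Indeed, multiplying out $fg$ via the defining relations of $R((x^{-1};\delta))$, each coefficient of $fg$ is a finite $\mathbb{Z}$-linear combination of terms $a\,\delta^{s}(b)$ with $a$ a coefficient of $f$ (so $a\in I^{(\alpha+1)}$), $b$ a coefficient of $g$ (so $b\in I$), and $s\geq 0$; since $I$ is a $\delta$-ideal we have $\delta^{s}(b)\in I$, and since $I^{(\alpha+1)}/I^{(\alpha)}=(0:I/I^{(\alpha)})$ we have $I^{(\alpha+1)}I\subseteq I^{(\alpha)}$, whence $a\,\delta^{s}(b)\in I^{(\alpha)}$. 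So every coefficient of $fg$ lies in $I^{(\alpha)}$. Granting this, I would prove by transfinite induction that $I^{(\alpha)}((x^{-1};\delta))\subseteq S^{(\alpha)}$, where $S^{(\alpha)}$ denotes the $\alpha$-th term of the upper left annihilator series of $S$: the base case is trivial, and the successor step is precisely the inclusion just established together with the induction hypothesis and the identity $S^{(\alpha+1)}=\{h\in S:hS\subseteq S^{(\alpha)}\}$. Taking $\alpha=\mu$ then gives $S=I^{(\mu)}((x^{-1};\delta))\subseteq S^{(\mu)}$, so the upper left annihilator series of $S$ exists and $S$ is left T-nilpotent.

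The step I expect to be the main obstacle is the passage through limit ordinals in this induction. At a limit ordinal $\lambda$ the definition gives only $S^{(\lambda)}=\bigcup_{\beta<\lambda}S^{(\beta)}$, and the induction hypothesis yields $\bigcup_{\beta<\lambda}I^{(\beta)}((x^{-1};\delta))\subseteq S^{(\lambda)}$; but this union can be strictly smaller than $I^{(\lambda)}((x^{-1};\delta))=\big(\bigcup_{\beta<\lambda}I^{(\beta)}\big)((x^{-1};\delta))$, since an element of the latter has infinitely many coefficients and there is no reason for all of them to lie in a single $I^{(\beta)}$ with $\beta<\lambda$ (this is precisely the failure of a "finite coefficient set" flagged in the introduction). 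Hence at a limit ordinal one cannot simply quote the union, and the argument has to be supplemented there — for example by showing directly that the relevant subquotients of $S$ stay left T-nilpotent, or by interleaving additional stages so that the effect of a limit ordinal is absorbed by finitely many further applications of the computational core. Carrying this limit step through carefully is the heart of the proof.
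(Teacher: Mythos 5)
Your successor-step computation is exactly the paper's own: the paper builds the same chain $I^{(\alpha)}((x^{-1};\delta))$ and verifies, just as you do, that $I^{(\alpha+1)}((x^{-1};\delta))\cdot I((x^{-1};\delta))\subseteq I^{(\alpha)}((x^{-1};\delta))$ because each coefficient of a product is a $\mathbb{Z}$-linear combination of elements $a\,\delta^{s}(b)$ with $a\in I^{(\alpha+1)}$, $\delta^{s}(b)\in I$. But the limit-ordinal difficulty you flag and leave open is not a technicality to be "supplemented": it is a genuine gap, and it is precisely the point at which the paper's proof also fails. The paper simply asserts $\big(\bigcup_{\beta<\lambda}I^{(\beta)}\big)((x^{-1};\delta))=\bigcup_{\beta<\lambda}I^{(\beta)}((x^{-1};\delta))$ at a limit ordinal $\lambda$, which is false whenever the coefficients of a single series climb cofinally through the $I^{(\beta)}$ (e.g.\ when $\lambda$ has cofinality $\omega$); your weaker inclusion $I^{(\lambda)}((x^{-1};\delta))\subseteq S^{(\lambda)}$ breaks for the same reason. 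Neither your suggested repairs nor any rearrangement of the induction can close this, because the statement itself is false for general $R$.

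A counterexample: take $\delta=0$, let $A$ be the algebra (without $1$) over a field $F$ with basis the words $e_{i_1}e_{i_2}\cdots e_{i_k}$ with $i_1>i_2>\cdots>i_k\geq 1$, i.e.\ generated by $e_1,e_2,\ldots$ subject to $e_ie_j=0$ for $i\leq j$, and put $R=F\oplus A$, $I=A$. Then $I$ is a left T-nilpotent $\delta$-ideal: for any sequence $a_1,a_2,\ldots\in I$, every nonzero word has strictly decreasing indices, so $a_1a_2\cdots a_k=0$ once $k$ exceeds the largest index occurring in $a_1$. Yet in $I((x^{-1};0))$ the element $f=\sum_{i\geq 1}e_ix^{-i}$ satisfies $f^k\neq 0$ for all $k$: the coefficient of $x^{-k(k+1)/2}$ in $f^k$ is the sum of $e_{i_1}\cdots e_{i_k}$ over compositions $i_1+\cdots+i_k=k(k+1)/2$, and the only surviving term is $e_ke_{k-1}\cdots e_1\neq 0$. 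So $I((x^{-1};0))$ is not even nil, hence not left T-nilpotent. Note that the upper left annihilator series of $I$ here has length $\omega$ (with $I^{(n)}$ spanned by words ending in some $e_j$, $j\leq n$) and $f$ lies in no $I^{(n)}((x^{-1};0))$ — this is exactly your limit-ordinal obstruction made concrete. The conclusion does hold under extra hypotheses, e.g.\ when $I$ is nilpotent (Lemma \ref{Ideal}(3)), which is what is actually available in the Noetherian situation of Theorem \ref{MainTheo1} since there $I\subseteq P(R)$ and $P(R)$ is nilpotent; but as stated, Lemma \ref{extofTnilpotence} cannot be proved, and your instinct that the limit step is "the heart of the proof" is correct in the strongest possible sense.
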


\begin{proof}
We will use Levitzki's characterization to prove that $I((x^{-1};\delta))$ is left T-nilpotent. Since $I$ is left T-nilpotent, the upper left annihilator series of $I$ exists.
Let
\begin{center}
$0=I^{(0)}\subseteq I^{(1)} \subseteq \ldots \subseteq I^{(\alpha)}\subseteq \ldots \subseteq I^{(\mu)}=I,$
\end{center}
where $I^{(\alpha+1)}/I^{(\alpha)}=(0:I/I^{(\alpha)})$ and $I^{(\alpha)}=\underset{\beta<\alpha}{\cup}I^{(\beta)}$, if $\alpha$ is a limit ordinal. 
We wish to obtain the upper left annihilator series for $I((x^{-1};\delta))$. By Remark \ref{Ralpha} and Lemma \ref{Ideal}(2), we have that $I^{(\alpha)}((x^{-1};\delta))$ is an ideal of $I((x^{-1};\delta))$ for any ordinal $\alpha$. 
Let $f(x)=\sum_{i=-\infty}^{n}a_{i}x^{i}\in I^{(\alpha+1)}((x^{-1};\delta))$, then for any 
$g(x)=\sum_{i=-\infty}^{m}b_{i}x^{i}\in I((x^{-1};\delta))$ we have that each coefficient
of the product $f(x)g(x)$ is a $\mathbb{Z}$-linear combination of terms of the form
\begin{center}
$a_{i_1}\delta^{k}(b_{j_1})$,
\end{center} 
where $a_{i_1}$ is any coefficient of $f(x)$, $b_{j_1}$ is any coefficient of $g(x)$ and $k$ is a non-negative integer.
Since $I$ is a $\delta$-ideal and $a_{i_1}\in I^{(\alpha+1)}$, by the construction of the upper left annihilator series we obtain
$a_{i_1}\delta^{k}(b_{j_1})\in I^{(\alpha)}$ for each $a_{i_1}$, $b_{j_1}$ and non-negative integer $k$. This means that $f(x)g(x)\in I^{(\alpha)}((x^{-1};\delta))$. 
If $\alpha$ is a limit ordinal, then we have 
\begin{center}
$I^{(\alpha)}((x^{-1};\delta))=\big(\underset{\beta<\alpha}{\cup}I^{(\beta)} \big)((x^{-1};\delta))=\underset{\beta<\alpha}{\cup}I^{(\beta)}((x^{-1};\delta))$.
\end{center}
Conversely, assume that $f(x)=\sum_{i=-\infty}^{n}a_{i}x^{i}\in I((x^{-1};\delta))$ such 
that 
\begin{center}
$f(x)I((x^{-1};\delta))\subseteq I^{(\alpha)}((x^{-1};\delta))$. 
\end{center}
We need to show that $f(x)\in I^{(\alpha+1)}((x^{-1};\delta))$. By the assumption, we have
$f(x)a\in I^{(\alpha)}((x^{-1};\delta))$ for each $a\in I$.
Since the leading term of $f(x)a$ is $a_{n}a$, we get that $a_{n}a\in I^{(\alpha)}$ for each 
$a\in I$. So, we obtain $a_{n}\in I^{(\alpha+1)}$. Now set $f'(x)=f(x)-a_{n}x^{n}$. 
By using the assumption and the fact that $a_{n}\in I^{(\alpha+1)}$, we get
\begin{center}
$f'(x)I((x^{-1};\delta))\subseteq I^{(\alpha)}((x^{-1};\delta))$.
\end{center} 
If we use the same argument 
as above, we see that the leading coefficient of $f'(x)$ belongs to $I^{(\alpha+1)}.$ Continuing this procedure, we get $a_{i}\in I^{(\alpha+1)}$ for each $i\leq n$. Thus, $f(x)\in I^{(\alpha+1)}((x^{-1};\delta)).$ 
Therefore, we obtain that
\begin{center}
$0=I^{(0)}((x^{-1};\delta))\subseteq I^{(1)}((x^{-1};\delta)) \subseteq \ldots \subseteq I^{(\alpha)}((x^{-1};\delta))\subseteq \ldots \subseteq I((x^{-1};\delta))$
\end{center}    
is the upper left annihilator series of $I((x^{-1};\delta))$ as desired.
\end{proof}
\noindent
Given $f(x)=\sum_{i=-\infty}^{n}a_{i}x^{i}\in R((x^{-1};\delta))$ with $a_{i}\in R$ for all $i\leq n$, we write $\delta^{j}(f(x))=\sum_{i=-\infty}^{n}\delta^{j}(a_{i})x^{i}$ where $j$ is a non-negative integer. And as noted in \cite{Hong4}, it can be seen that  the formula $xf(x)-f(x)x=\delta(f(x))$ holds for any $f(x)\in R((x^{-1};\delta))$ and recursively the following formula can be obtained 
\begin{center}
$\delta^{j}(f(x))=\sum_{i=0}^{j}(-1)^{j-i}\binom{j}{i}x^{i}(f(x))x^{j-i}.$
\end{center}
\begin{theorem}\label{MainTheo1}
Let $R$ be a left or right Noetherian ring with a derivation $\delta$. Then 
\begin{center} 
$\mathcal{I}_{l}(R((x^{-1};\delta)))=\mathcal{I}_{l,\delta}(R)((x^{-1};\delta))$,
\end{center}
where $\mathcal{I}_{l,\delta}(R)=\{a \in R : \sum_{j=0}^{\infty}\delta^{j}(a)R \text{ is left T-nilpotent}\}$.
\end{theorem}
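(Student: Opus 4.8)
The plan is to prove the two inclusions of the claimed equality separately, after recording some elementary facts about $\mathcal{I}_{l,\delta}(R)$. First I would show that
\[
\mathcal{I}_{l,\delta}(R)=\sum\{I\leq R : I \text{ is a left T-nilpotent } \delta\text{-ideal of } R\},
\]
so that in particular $\mathcal{I}_{l,\delta}(R)$ is a $\delta$-ideal of $R$. The inclusion $\supseteq$ is clear: if $I$ is a left T-nilpotent $\delta$-ideal and $a\in I$, then $\sum_{j\geq 0}\delta^{j}(a)R\subseteq I$ is left T-nilpotent, so $a\in\mathcal{I}_{l,\delta}(R)$. For $\subseteq$, if $a\in\mathcal{I}_{l,\delta}(R)$ then $J:=\sum_{j\geq 0}\delta^{j}(a)R$ is a left T-nilpotent right ideal with $\delta(J)\subseteq J$; by Proposition \ref{2-sided}(1) the two-sided ideal $RJR$ is left T-nilpotent, it is visibly a $\delta$-ideal, and $a\in RJR$. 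Since a sum of $\delta$-ideals is again a $\delta$-ideal, $\mathcal{I}_{l,\delta}(R)$ is a $\delta$-ideal; and since a left T-nilpotent ideal is nil, $\mathcal{I}_{l,\delta}(R)$ is a sum of nil ideals and hence is itself nil.

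For the inclusion $\mathcal{I}_{l,\delta}(R)((x^{-1};\delta))\subseteq\mathcal{I}_{l}(R((x^{-1};\delta)))$ I would use the Noetherian hypothesis through Levitzki's theorem: a nil ideal of a left or right Noetherian ring is nilpotent, so $\mathcal{I}_{l,\delta}(R)$ is a nilpotent, hence left T-nilpotent, $\delta$-ideal of $R$. Lemma \ref{extofTnilpotence} then shows that $\mathcal{I}_{l,\delta}(R)((x^{-1};\delta))$ is a left T-nilpotent ideal of $R((x^{-1};\delta))$, and therefore it is contained in $\mathcal{I}_{l}(R((x^{-1};\delta)))$.

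The substantial direction is $\mathcal{I}_{l}(R((x^{-1};\delta)))\subseteq\mathcal{I}_{l,\delta}(R)((x^{-1};\delta))$, and the crux is the following claim: if $\mathfrak{A}$ is a left T-nilpotent ideal of $R((x^{-1};\delta))$ and $L(\mathfrak{A})$ denotes the set of leading coefficients of nonzero elements of $\mathfrak{A}$ together with $0$, then $L(\mathfrak{A})\subseteq\mathcal{I}_{l,\delta}(R)$. I would prove this by checking that $L(\mathfrak{A})$ is a left T-nilpotent $\delta$-ideal of $R$. That $L(\mathfrak{A})$ is a $\delta$-ideal follows from the product rule for leading coefficients together with the remark that multiplying a witness $g\in\mathfrak{A}$ by an integer power of $x$ renormalizes its degree without altering its leading coefficient, since $x^{m}a=ax^{m}+(\text{terms of lower degree})$ for every $m\in\mathbb{Z}$; this gives closure under addition, while closure under multiplication by $R$ and under $\delta$ (using $\delta(g)=xg-gx$) is immediate. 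To prove $L(\mathfrak{A})$ is left T-nilpotent I would use the degree filtration $F_{k}=\{h\in R((x^{-1};\delta)):\deg h\leq k\}$: given $a_{1},a_{2},\ldots\in L(\mathfrak{A})$ with witnesses $f_{i}\in\mathfrak{A}$, the renormalized elements $x^{-\deg f_{i}}f_{i}\in\mathfrak{A}$ have the form $a_{i}+g_{i}$ with $\deg g_{i}\leq -1$; since the $x^{0}$-coefficient induces a ring isomorphism $F_{0}/F_{-1}\cong R$ and $\mathfrak{A}$ is left T-nilpotent, a relation $(a_{1}+g_{1})\cdots(a_{k}+g_{k})=0$ forces $a_{1}\cdots a_{k}=0$. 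Hence $L(\mathfrak{A})$ is a left T-nilpotent $\delta$-ideal, so $L(\mathfrak{A})\subseteq\mathcal{I}_{l,\delta}(R)$. Finally, let $f=\sum_{i\leq n}a_{i}x^{i}\in\mathcal{I}_{l}(R((x^{-1};\delta)))$; since $\mathcal{I}_{l}$ is the sum of all left T-nilpotent ideals and a finite sum of such ideals is again left T-nilpotent (Proposition \ref{2-sided}(2)), $f$ lies in some left T-nilpotent ideal, so its leading coefficient lies in $\mathcal{I}_{l,\delta}(R)$. Subtracting off the corresponding monomial, which lies in $\mathcal{I}_{l,\delta}(R)((x^{-1};\delta))\subseteq\mathcal{I}_{l}(R((x^{-1};\delta)))$ by the inclusion already proved, yields an element of $\mathcal{I}_{l}(R((x^{-1};\delta)))$ of strictly smaller degree. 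Iterating, the degrees of the successive elements strictly decrease, so every index $i$ with $a_{i}\neq 0$ occurs as such a degree at some stage; hence every coefficient of $f$ lies in $\mathcal{I}_{l,\delta}(R)$, that is, $f\in\mathcal{I}_{l,\delta}(R)((x^{-1};\delta))$.

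I expect the main obstacle to be exactly the phenomenon highlighted in the introduction: since an element of $R((x^{-1};\delta))$ has an infinite coefficient set, the K\"{o}nig's tree lemma arguments used in \cite{Hong1} and \cite{Hong4} are unavailable. This manifests in two places. The first is the claim that the set of leading coefficients of a left T-nilpotent ideal is itself left T-nilpotent; the $x$-power renormalization combined with the ring isomorphism $F_{0}/F_{-1}\cong R$ is the tool that settles it. The second is the passage from ``all leading coefficients lie in $\mathcal{I}_{l,\delta}(R)$'' to ``all coefficients lie in $\mathcal{I}_{l,\delta}(R)$'', which is an infinite descent that closes only because the first inclusion permits subtracting leading monomials while remaining inside $\mathcal{I}_{l}(R((x^{-1};\delta)))$. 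The Noetherian hypothesis is used at precisely one point, via Levitzki's theorem, to promote the nil ideal $\mathcal{I}_{l,\delta}(R)$ to a left T-nilpotent one, which is what makes Lemma \ref{extofTnilpotence} applicable.
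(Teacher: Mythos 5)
Your proposal is correct, but the engine of your hard inclusion is genuinely different from the paper's. The paper fixes the leading coefficient $a_n$ of $f\in\mathcal{I}_{l}(R((x^{-1};\delta)))$ and shows directly that $\sum_{j\geq 0}\delta^{j}(a_n)R$ is left T-nilpotent, by forming products $\delta^{i_1}(f(x))r_1\cdots\delta^{i_k}(f(x))r_k$, using the identity $\delta^{j}(f(x))=\sum_{i=0}^{j}(-1)^{j-i}\binom{j}{i}x^{i}f(x)x^{j-i}$ together with \cite[Lemma 2.2]{Hong4} to keep these factors inside a left T-nilpotent ideal, and reading off the vanishing of the top coefficient $\delta^{i_1}(a_n)r_1\cdots\delta^{i_k}(a_n)r_k$. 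You instead prove the structural fact that for any left T-nilpotent ideal $\mathfrak{A}$ of $R((x^{-1};\delta))$ the leading-coefficient set $L(\mathfrak{A})$ is a left T-nilpotent $\delta$-ideal of $R$, using the invertibility of $x$ to renormalize witnesses to degree $0$ and the ring isomorphism $F_{0}/F_{-1}\cong R$ given by the $x^{0}$-coefficient; combined with your preliminary identification $\mathcal{I}_{l,\delta}(R)=\sum\{I\leq R: I\ \text{left T-nilpotent }\delta\text{-ideal}\}$, this yields $L(\mathfrak{A})\subseteq\mathcal{I}_{l,\delta}(R)$. Your route buys two things: it handles arbitrary elements of the relevant T-nilpotent sets at once (no reduction from sums to single terms $\delta^{i}(a_n)r$ is needed, a point the paper's product expansion glosses over), and the characterization of $\mathcal{I}_{l,\delta}(R)$ makes the easy inclusion a one-liner: $\mathcal{I}_{l,\delta}(R)$ is a nil $\delta$-ideal, hence nilpotent by Levitzki's theorem in the Noetherian setting, and Lemma \ref{extofTnilpotence} (or Lemma \ref{Ideal}(3)) applies — whereas the paper argues coefficientwise using $P(R)^{k}=0$. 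One small remark on hypothesis economy: in your descent you subtract $a_nx^n$ by appealing to the already-proved inclusion, so your argument for $\mathcal{I}_{l}(R((x^{-1};\delta)))\subseteq\mathcal{I}_{l,\delta}(R)((x^{-1};\delta))$ invokes the Noetherian hypothesis, while the paper's version of this step does not (it puts $a_nx^n$ into the left T-nilpotent ideal coming from Lemma \ref{extofTnilpotence} applied to the $\delta$-ideal generated by $\sum_j\delta^{j}(a_n)R$); you could do the same by placing $a_n$ in a single left T-nilpotent $\delta$-ideal of $R$ via your own characterization, but under the stated hypotheses this is only a matter of economy, not a gap.
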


\begin{proof}
Let $f(x)=\sum_{i=-\infty}^{n}a_{i}x^{i}\in \mathcal{I}_{l}(R((x^{-1};\delta)))$, where
$a_{i}\in R$ for all $i\leq n$. We want to show that 
$a_{i}\in \mathcal{I}_{l,\delta}(R)$ for all $i\leq n$.
Fix a sequence of elements $r_{1}, r_{2},\ldots \in R$ and a sequence of non-negative integers $i_1, i_2, \ldots $ and also set $s_{k}:=\delta^{i_1}(a_n)r_1\delta^{i_2}(a_n)r_2\ldots \delta^{i_k}(a_n)r_k$, where $k$ is a non-negative integer. By the assumption and \cite[Lemma 2.2]{Hong4},  we have that each of the elements 
\begin{center}
$\delta^{i_1}(f(x))r_1, \delta^{i_2}(f(x))r_2,\ldots $ 
\end{center}
belongs to a left $T$-nilpotent ideal. Hence, there exists some index $k$ such that 
\begin{center}
$\delta^{i_1}(f(x))r_1\delta^{i_2}(f(x))r_2\ldots \delta^{i_k}(f(x))r_k=0$.
\end{center}
If we expand this product, then we see that the leading coefficient is $s_{k}=0$. This means that $\sum_{j=0}^{\infty}\delta^{j}(a_n)R$ is left T-nilpotent and we get 
$a_{n}\in \mathcal{I}_{l,\delta}(R)$.
By Lemma \ref{extofTnilpotence}, we know that $a_{n}R((x^{-1};\delta))$ is a subset of the left T-nilpotent ideal $\sum_{j=0}^{\infty}\delta^{j}(a_{n})R((x^{-1};\delta))$ of $R((x^{-1};\delta))$. 
Therefore, $a_nx^{n}\in \mathcal{I}_{l}(R((x^{-1};\delta)))$. Set $f'(x)=f(x)-a_nx^{n}$. Then we have 
$f'(x)\in \mathcal{I}_{l}(R((x^{-1};\delta)))$. Thus, the leading coefficient of $f'(x)$, namely $a_{n-1}$, belongs to  $\mathcal{I}_{l,\delta}(R)$. And if we apply the same procedure, then we obtain $a_{i}\in \mathcal{I}_{l,\delta}(R)$ for each $i\leq n$. So, $f(x)=\sum_{i=-\infty}^{n}a_{i}x^{i} \in \mathcal{I}_{l,\delta}(R)((x^{-1};\delta))$ as desired.
 
Conversely, let $f(x)=\sum_{i=-\infty}^{n}a_{i}x^{i}\in \mathcal{I}_{l,\delta}(R)((x^{-1};\delta))$. We want to show that $f(x)=\sum_{i=-\infty}^{n}a_{i}x^{i}\in \mathcal{I}_{l}(R((x^{-1};\delta)))$.
It is a well-known fact that if $R$ is left or right Noetherian, then the prime radical $P(R)$ of $R$ is nilpotent. Without loss of generality, we may assume that the nilpotency index of $P(R)$ is $k$. Let $g_1(x), g_2(x), \ldots \in R((x^{-1},\delta))$. Consider the product 
\begin{center}
$f(x)g_1(x)f(x)g_2(x)\ldots f(x)g_{k}(x)$.
\end{center}
Each coefficient of this product is a $\mathbb{Z}$-linear combination of elements of the form
\begin{center}
$s_{k}:=\delta^{i_1}(a_{i_1})\delta^{i'_1}(b_1)\delta^{i_2}(a_{i_2})\delta^{i'_2}(b_2)\ldots \delta^{i_{k}}(a_{i_{k}})\delta^{i'_{k}}(b_{k}),$
\end{center}  
where $a_{i_{j}}$ is any coefficient of $f(x)$, $b_{j}$ is any coefficient of $g_{j}(x)$ and $1\leq j\leq k$.
Since $\sum_{j=0}^{\infty}\delta^{j}(a_{i})R$ is left T-nilpotent for each $i\leq n$, by Proposition \ref{Prop2.2} we have that $\sum_{j=0}^{\infty}\delta^{j}(a_{i})R\subseteq P(R)$ for each $i\leq n$.
We also know that $P(R)$ is nilpotent of index $k$, hence all the coefficients $s_{k}$ are equal to zero. Then we get the result. 
\end{proof}

In \cite[section 5]{Hong1}, the higher left T-nilpotent radideals are defined as follows:
Set $\mathcal{I}_{l}^{(0)}=0$. Let $\alpha$ be a given ordinal. If $\alpha$ is 
the successor of $\beta$, set 
\begin{center}
$\mathcal{I}_{l}^{(\alpha)}(R)=\{a\in R:\ a+\mathcal{I}_{l}^{(\beta)}(R)\in \mathcal{I}_{l}(R/\mathcal{I}_{l}^{(\beta)}(R))\}$.
\end{center}
If $\alpha$ is a limit ordinal, then we define
\begin{center}
$\mathcal{I}_{l}^{(\alpha)}(R)=\underset{\beta<\alpha}{\cup}\mathcal{I}_{l}^{(\beta)}(R)$.
\end{center}
As mentioned in \cite{Hong1}, one can define the prime radical of 
a ring $R$ alternatively as the limit of the left T-nilpotent radideals.
Now, our aim is to generalize Theorem \ref{MainTheo1} for higher left T-nilpotent radideals 
by using transfinite induction. 
Hence, we obtain a new characterization for the prime radical of pseudo-differential operator rings $P(R((x^{-1};\delta)))$. 
\begin{proposition}\label{higherversion}
Let $R$ be a left or right Noetherian ring with a derivation $\delta$. Then the higher left T-nilpotent radideals satisfy 
\begin{center}
$\mathcal{I}_{l}^{(\alpha)}(R((x^{-1};\delta)))= \mathcal{I}_{l,\delta}^{(\alpha)}(R)((x^{-1};\delta))$,
\end{center}
for any ordinal $\alpha$.
\end{proposition}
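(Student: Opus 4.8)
The plan is to argue by transfinite induction on $\alpha$, using Theorem \ref{MainTheo1} as the successor step in the special case and, more generally, adapting its proof to the quotient rings that arise at each stage. For $\alpha=0$ both sides are zero, so there is nothing to prove. For a limit ordinal $\alpha$, the induction hypothesis gives $\mathcal{I}_{l}^{(\beta)}(R((x^{-1};\delta)))=\mathcal{I}_{l,\delta}^{(\beta)}(R)((x^{-1};\delta))$ for every $\beta<\alpha$, and then
\begin{center}
$\mathcal{I}_{l}^{(\alpha)}(R((x^{-1};\delta)))=\underset{\beta<\alpha}{\cup}\mathcal{I}_{l}^{(\beta)}(R((x^{-1};\delta)))=\underset{\beta<\alpha}{\cup}\mathcal{I}_{l,\delta}^{(\beta)}(R)((x^{-1};\delta))=\big(\underset{\beta<\alpha}{\cup}\mathcal{I}_{l,\delta}^{(\beta)}(R)\big)((x^{-1};\delta))=\mathcal{I}_{l,\delta}^{(\alpha)}(R)((x^{-1};\delta))$,
\end{center}
where the third equality is the fact (already used in the proof of Lemma \ref{extofTnilpotence}) that the $(\!(x^{-1};\delta)\!)$ construction commutes with unions of chains of ideals.

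The successor step is the heart of the matter. Suppose $\alpha=\beta+1$ and that the claim holds for $\beta$; write $J=\mathcal{I}_{l,\delta}^{(\beta)}(R)$, so by the induction hypothesis $\mathcal{I}_{l}^{(\beta)}(R((x^{-1};\delta)))=J((x^{-1};\delta))$. First I would check that $J$ is a $\delta$-ideal of $R$: it is an ideal by \cite{Hong1}, and $\delta$-invariance should follow from the defining formula for $\mathcal{I}_{l,\delta}$ together with Lemma \ref{Tnilpotence} and an induction on $\beta$ (alternatively, from Remark \ref{Ralpha}-style reasoning applied at each stage). Granting this, Lemma \ref{Ideal}(2) tells us $J((x^{-1};\delta))$ is an ideal of $R((x^{-1};\delta))$, and the natural map gives a ring isomorphism
\begin{center}
$R((x^{-1};\delta))/J((x^{-1};\delta))\cong (R/J)((x^{-1};\bar\delta))$,
\end{center}
where $\bar\delta$ is the induced derivation on $R/J$. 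By definition of the higher radideals,
\begin{center}
$\mathcal{I}_{l}^{(\alpha)}(R((x^{-1};\delta)))/J((x^{-1};\delta))=\mathcal{I}_{l}\big(R((x^{-1};\delta))/J((x^{-1};\delta))\big)=\mathcal{I}_{l}\big((R/J)((x^{-1};\bar\delta))\big)$.
\end{center}
Since $R/J$ is again left or right Noetherian, Theorem \ref{MainTheo1} applies to it and yields $\mathcal{I}_{l}\big((R/J)((x^{-1};\bar\delta))\big)=\mathcal{I}_{l,\bar\delta}(R/J)((x^{-1};\bar\delta))$. Pulling this back through the isomorphism, $\mathcal{I}_{l}^{(\alpha)}(R((x^{-1};\delta)))$ consists exactly of those $f(x)=\sum a_i x^i$ all of whose coefficients $a_i$ satisfy $a_i+J\in\mathcal{I}_{l,\bar\delta}(R/J)$, i.e. $a_i\in\mathcal{I}_{l,\delta}^{(\alpha)}(R)$. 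That last identification is just the definition of $\mathcal{I}_{l,\delta}^{(\alpha)}(R)$ unwound through the quotient, so $\mathcal{I}_{l}^{(\alpha)}(R((x^{-1};\delta)))=\mathcal{I}_{l,\delta}^{(\alpha)}(R)((x^{-1};\delta))$ as required.

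I expect the main obstacle to be the bookkeeping around the quotient: one must verify carefully that $\mathcal{I}_{l,\delta}^{(\beta)}(R)$ is a $\delta$-ideal (so that the quotient ring carries an induced derivation at all), that the induced derivation on $R/J$ is precisely the $\bar\delta$ for which $\mathcal{I}_{l,\bar\delta}(R/J)$ is being computed, and that the notion $\mathcal{I}_{l,\delta}^{(\alpha)}(R)$ — defined via $a+\mathcal{I}_{l,\delta}^{(\beta)}(R)\in\mathcal{I}_{l,\bar\delta}(R/\mathcal{I}_{l,\delta}^{(\beta)}(R))$ — really does match up coefficient-by-coefficient with the $R((x^{-1};\delta))$-side computation. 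Everything else is either the limit-ordinal union step (routine, as above) or a direct appeal to Theorem \ref{MainTheo1} applied to the Noetherian ring $R/J$. Finally, taking $\alpha$ large enough that the left T-nilpotent transfinite series stabilizes at $P(R)$ on the base ring — and correspondingly at $P(R((x^{-1};\delta)))$ on the operator ring, using that this ring is Noetherian when $R$ is — yields the promised description $P(R((x^{-1};\delta)))=\big(\underset{\alpha}{\cup}\mathcal{I}_{l,\delta}^{(\alpha)}(R)\big)((x^{-1};\delta))$ of the prime radical.
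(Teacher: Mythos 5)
Your proposal is correct and follows essentially the same route as the paper: transfinite induction, with the successor step handled by passing to the quotient $R((x^{-1};\delta))/\mathcal{I}_{l,\delta}^{(\beta)}(R)((x^{-1};\delta))\cong (R/\mathcal{I}_{l,\delta}^{(\beta)}(R))((x^{-1};\bar\delta))$ and applying Theorem \ref{MainTheo1} to the (still Noetherian) quotient, and the limit step handled by commuting the $((x^{-1};\delta))$ construction with unions. You are in fact somewhat more careful than the paper about the bookkeeping ($\delta$-invariance of $\mathcal{I}_{l,\delta}^{(\beta)}(R)$ and the coefficientwise identification defining $\mathcal{I}_{l,\delta}^{(\alpha)}(R)$), but the underlying argument is the same.
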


\begin{proof}
We will use transfinite induction to prove the statement. For $\alpha=1$, the result is clear. Assume that the result is true for every ordinal $\beta <\alpha$.
If $\alpha$ is not a limit ordinal, then $\alpha$ is a successor of some ordinal $\beta$ 
and by the assumption, we have 
$\mathcal{I}_{l}^{(\beta)}(R((x^{-1};\delta)))=\mathcal{I}_{l,\delta}^{(\beta)}(R)((x^{-1};\delta))$. 
We consider the natural surjection 
\begin{center}
$R((x^{-1};\delta))\rightarrow R((x^{-1};\delta))/\mathcal{I}_{l,\delta}^{(\alpha)}(R)((x^{-1};\delta))$
\end{center}
and the natural isomorphism
\begin{center}
$R((x^{-1};\delta))/\mathcal{I}_{l,\delta}^{(\alpha)}(R)((x^{-1};\delta))\cong (R/\mathcal{I}_{l,\delta}^{(\alpha)}(R))((x^{-1};\bar{\delta}))$,  
\end{center}
where $\bar{\delta}$ is the derivation of the factor ring $R/\mathcal{I}_{l,\delta}^{(\alpha)}(R)$ induced by $\delta$. 
By Theorem \ref{MainTheo1}, we have that the coefficients of the elements of  $\mathcal{I}_{l}\bigg(\big(R/\mathcal{I}_{l,\delta}^{(\beta)}(R)\big)((x^{-1};\bar{\delta}))\bigg)$ are determined by the the ideal $\mathcal{I}_{l,\bar{\delta}}(R/\mathcal{I}_{l,\delta}^{(\beta)}(R))$. By using the natural isomorphism and the natural surjection, we get the result.
If $\alpha$ is a limit ordinal, then by Theorem \ref{MainTheo1} we have 
\begin{center}
$\mathcal{I}_{l}^{(\alpha)}(R((x^{-1};\delta)))=\underset{\beta<\alpha}{\cup}\mathcal{I}_{l}^{(\beta)}(R((x^{-1};\delta)))=\bigg(\underset{\beta<\alpha}{\cup}\mathcal{I}_{l,\delta}^{(\beta)}(R)\bigg)((x^{-1};\delta)).$
\end{center} 
Therefore, we can take 
\begin{center}
$\mathcal{I}_{l,\delta}^{(\alpha)}(R)=\bigg(\underset{\beta<\alpha}{\cup}\mathcal{I}_{l,\delta}^{(\beta)}(R)\bigg)$.
\end{center}
\end{proof}
\noindent
The following example illustrates that the "Noetherian" condition of Theorem \ref{MainTheo1}
is essential.
\begin{example}
Consider the commutative ring 
\begin{center}
$R=\mathbb{Z}[a_1,a_2,\ldots :\ a_{i}^{i+1}=0 \ \text{for each } i\geq 1].$
\end{center}
Clearly, $a_{i}R$ is nilpotent and so left $T$-nilpotent for each $i$. Thus the left $T$-nilpotent radideal of $R$
is $\mathcal{I}_{l}(R)=(a_1,a_2,\ldots)$. Let $\delta=0.$ Then we have 
$\mathcal{I}_{l,\delta}(R)=\mathcal{I}_{l}(R)$.  
Consider $f(x)=\sum_{i=-\infty}^{1}a_{i}x^{i}\in \mathcal{I}_{l,\delta}(R)((x^{-1};\delta))).$ This series is not nilpotent.
\end{example}
\noindent
As a direct consequence of  Proposition \ref{higherversion}, we have the following: 
\begin{proposition}
Let $R$ be a left or right Noetherian ring with a derivation $\delta$. Then we have
\begin{center}
$P(R((x^{-1};\delta)))=P_{\delta}((x^{-1};\delta))$,
\end{center}
where $P_{\delta}((x^{-1};\delta))$ is the limit of the left T-nilpotent radideals $\mathcal{I}_{l,\delta}^{(\alpha)}(R)$.
\end{proposition}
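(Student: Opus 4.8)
The plan is to deduce this final proposition directly from Proposition \ref{higherversion} together with the observation, already noted in the paragraph preceding that proposition, that the prime radical of a ring is the limit (i.e.\ the stabilizing value) of the transfinite chain of higher left T-nilpotent radideals. First I would recall why this limit characterization holds: the chain $0=\mathcal{I}_{l}^{(0)}(S)\subseteq \mathcal{I}_{l}^{(1)}(S)\subseteq\cdots$ is an ascending chain of ideals of $S$, so by cardinality it must stabilize at some ordinal $\mu$, and the stable value is an idempotent radical ideal whose quotient has zero left T-nilpotent radideal; since the left T-nilpotent radideal is contained in the prime radical by Proposition \ref{Prop2.2}, and since a ring with zero prime radical has zero left T-nilpotent radideal, a standard semisimple-class argument identifies $\mathcal{I}_{l}^{(\mu)}(S)$ with $P(S)$. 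I would apply this once to $S=R((x^{-1};\delta))$ to get $P(R((x^{-1};\delta)))=\mathcal{I}_{l}^{(\mu)}(R((x^{-1};\delta)))$ for a suitable $\mu$.

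Next I would invoke Proposition \ref{higherversion}, which gives $\mathcal{I}_{l}^{(\alpha)}(R((x^{-1};\delta)))=\mathcal{I}_{l,\delta}^{(\alpha)}(R)((x^{-1};\delta))$ for every ordinal $\alpha$; in particular this holds at $\alpha=\mu$, so $P(R((x^{-1};\delta)))=\mathcal{I}_{l,\delta}^{(\mu)}(R)((x^{-1};\delta))$. It remains only to identify $\mathcal{I}_{l,\delta}^{(\mu)}(R)$ with the limit $P_{\delta}$ of the chain $\mathcal{I}_{l,\delta}^{(\alpha)}(R)$. Since this chain is itself ascending (by the definition recalled before Proposition \ref{higherversion}, together with the fact that $0\in\mathcal{I}_{l}$ of any quotient), it too stabilizes, say at $\mu'$; enlarging $\mu$ if necessary we may take a common ordinal at which both chains are stable, using that the coefficient-wise correspondence of Proposition \ref{higherversion} forces the two chains to stabilize together. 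Then $\mathcal{I}_{l,\delta}^{(\mu)}(R)=P_{\delta}$ by definition of $P_{\delta}$, and substituting yields $P(R((x^{-1};\delta)))=P_{\delta}((x^{-1};\delta))$, which is the claim.

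The main obstacle I anticipate is purely bookkeeping rather than conceptual: one must be careful that the two transfinite chains --- the one over $R((x^{-1};\delta))$ and the one over $R$ --- stabilize at the \emph{same} ordinal, so that the equality of Proposition \ref{higherversion} can be read off at a single $\mu$ serving as the limit for both. This is immediate from the proposition itself, since $\mathcal{I}_{l}^{(\alpha)}(R((x^{-1};\delta)))=\mathcal{I}_{l}^{(\alpha+1)}(R((x^{-1};\delta)))$ if and only if $\mathcal{I}_{l,\delta}^{(\alpha)}(R)((x^{-1};\delta))=\mathcal{I}_{l,\delta}^{(\alpha+1)}(R)((x^{-1};\delta))$, and the latter holds if and only if $\mathcal{I}_{l,\delta}^{(\alpha)}(R)=\mathcal{I}_{l,\delta}^{(\alpha+1)}(R)$ because $I\mapsto I((x^{-1};\delta))$ is injective on ideals of $R$ (distinct ideals give distinct constant-coefficient series). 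A secondary point worth stating explicitly is the justification that the left T-nilpotent radideal limit equals the prime radical; since the paper attributes this to \cite{Hong1}, I would simply cite it rather than reprove it, keeping the argument short.
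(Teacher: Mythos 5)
Your proposal is correct and follows exactly the route the paper intends: the paper states this result as a direct consequence of Proposition \ref{higherversion} combined with the characterization (cited from \cite{Hong1}) of the prime radical as the stabilizing value of the chain of higher left T-nilpotent radideals, which is precisely your argument, carried out with more care about the stabilizing ordinal. Your extra bookkeeping (injectivity of $I\mapsto I((x^{-1};\delta))$ forcing the two chains to stabilize together, or simply taking a common large ordinal) is a reasonable way to make the paper's unstated step explicit.
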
 

$${\bf Acknowledgements}$$
The hospitalities of Vladimir Bavula and the University of Sheffield are greatly acknowledged.

\noindent
H. Melis Tekin Akcin\\
Hacettepe University\\
Department of Mathematics\\
Beytepe/Ankara, 06800\\
e-mail: hmtekin@hacettepe.edu.tr

\end{document}